\newtheorem{definition}{Definition}
\newtheorem{proposition}{Proposition}
\newcommand{\Lo}{\mathbb{L}}
\newcommand{\Ho}{\mathbb{H}}
\newcommand{\llnorm}{\mathcal{L}(\ell^2,\ell^2)}
\newcounter{oq}
\def\@evenfoot{}
\def\@oddfoot{}
\begin{document}
\def\@evenhead{\vbox{\hbox to \textwidth{\thepage\leftmark}\strut\newline\hrule}}

\def\@oddhead{\raisebox{0pt}[\headheight][0pt]{%
\vbox{\hbox to \textwidth{\rightmark\thepage\strut}\hrule}}}

\newpage
\normalsize
\def\bibname{\vspace*{-30mm}{\centerline{\normalsize References}}}
\thispagestyle{empty}
\vskip 5 mm

\centerline{\bf A NOTE ON OPEN QUESTIONS ASKED TO ANALYSIS AND}
\centerline{\bf NUMERICS CONCERNING THE HAUSDORFF MOMENT PROBLEM}

\vskip 0.3cm

\centerline{\bf Daniel Gerth and Bernd Hofmann}

\vskip 0.3cm

\noindent{\bf Abstract}
{\small We address facts and open questions concerning the degree of ill-posedness of
  the composite Hausdorff moment problem aimed at the recovery of a
  function $x \in L^2(0,1)$ from elements of the infinite dimensional sequence space $\ell^2$
  that characterize moments applied to the antiderivative of $x$.
  This degree, unknown by now, results from the decay rate
  of the singular values of the associated compact forward operator $A$, which is the
  composition of the compact simple integration operator mapping in $L^2(0,1)$
  and the non-compact Hausdorff moment operator $B^{(H)}$ mapping from $L^2(0,1)$ to $\ell^2$.
  There is a seeming contradiction between (a) numerical computations, which show (even for large $n$)
  an exponential decay of the singular values for $n$-dimensional matrices obtained by discretizing
  the operator $A$, and \linebreak (b) a strongly limited smoothness of the well-known kernel $k$ of the Hilbert-Schmidt
  operator $A^*A$. Fact (a) suggests severe ill-posedness of the infinite dimensional
  Hausdorff moment problem, whereas fact (b) lets us expect the opposite, because exponential
  ill-posedness occurs in common just for $C^\infty$-kernels $k$. We recall arguments for the possible occurrence of a polynomial decay of the singular values of $A$,
  even if the numerics seems to be against it, and discuss some issues in the numerical approximation of non-compact operators.}

\vskip 0.2cm

\noindent {\bf Key words:} Ill-posed linear operators, decay rates of singular values, Hausdorff moment problem, composition of compact and non-compact operators, kernel smoothness of Hilbert-Schmidt operators.

\vskip 0.2cm

\noindent {\bf AMS Mathematics Subject Classification:}  47A52, 47B06, 44A60, 45C05, 65R30.

\vskip 0.3cm

\setcounter{figure}{0}

\renewcommand{\thesection}{\large 1}

\section{\large Introduction}
\label{sec:intro}

In the recent paper \cite{GHHK21}, we have dealt with properties of the forward operator $B^{(H)}: L^2(0,1) \to \ell^2$ of the Hausdorff moment problem defined as
\begin{equation} \label{eq:Haus}
[B^{(H)} z]_j:= \int_0^1 t^{j-1}z(t)dt \qquad (j=1,2,...).
\end{equation}
This inverse problem, which can be written as an operator equation
\begin{equation} \label{eq:out}
B^{(H)}\, z\,=\,y,
\end{equation}
aims at the recovery of a function $z \in L^2(0,1)$ from data of the square-summable infinite sequence  $\left\{\int_0^1 t^{j-1}z(t)dt\right\}_{j=1}^\infty$ of monomial moments to $z$.
It has been outlined ibid that $B^{(H)}$ is a non-compact and injective bounded linear operator with non-closed range, which implies that the operator equation \eqref{eq:out} is ill-posed of type~I in the sense
of Nashed \cite{Nashed87}. This means that there is an infinite dimensional subspace of $\ell^2$ on which the inverse of $B^{(H)}$ is continuous. Despite considerable efforts in \cite{GHHK21}, a full description of this subspace is still missing.

\medskip

{\parindent0em {\bf Open Question~\arabic{oq}\stepcounter{oq}:}} {\sl What is the infinite dimensional subspace of $\ell^2$, on which the inverse of the operator $B^{(H)}$ is continuous?}

 \medskip

 In contrast to linear ill-posed problems of type~II with compact forward operators in Hilbert spaces, where the decay rate to zero of the singular values expresses
the degree of ill-posedness in a concise way, the characterization of the strength of ill-posedness is much more complicated (cf.~\cite{HofFlei99,HofKin10,MNH20}) in non-compact cases like \eqref{eq:out} with $B^{(H)}$ from \eqref{eq:Haus}.
As discussed in \cite{GHHK21}, for the Hausdorff operator $B^{(H)}$ it seems to be important that $B^{(H)}(B^{(H)})^*: \ell^2 \to \ell^2$ coincides with the infinite dimensional Hilbert matrix $\Ho=\left( \frac{1}{i+j-1}\right)_{i=1,j=1}^{\infty}$, where the $n$-dimensional segments $\Ho_n=\left( \frac{1}{i+j-1}\right)_{i=1,j=1}^{n}$ are very ill-conditioned $n \times n$-matrices with condition numbers that grow exponentially with $n$
as an impact of the well-known asymptotics  $\sigma_n(\Ho_n) \sim \,\exp(-3.526 n)$ as $n \to \infty$ (see, e.g.,~\cite[Example~3.3]{Beckermann00}).

If, however, $B^{(H)}$ acts in combination with a compact operator, then the composition operator is compact, and its singular value decay rate serves as an appropriate measure for the corresponding ill-posedness.
Such situation occurs if we try to recover $x \in L^2(0,1)$ from moment data applied to the antiderivative of $x$. In the present note, we combine the compact integration operator $J: L^2(0,1) \to L^2(0,1)$ defined as
\begin{equation}\label{eq:J}
[J x](s):=\int_0^s x(t)dt\qquad(0 \le s \le 1)
\end{equation}
with $B^{(H)}$ from \eqref{eq:Haus} in the operator equation
\begin{equation} \label{eq:general}
A x\,=\,y\,,
\end{equation}
where the forward operator is here the composition
\begin{equation} \label{eq:comp}
A:=B^{(H)} \circ J: L^2(0,1) \to \ell^2.
\end{equation}
Taking into account the well-known properties of $J$ and $B^{(H)}$, the composition $A$ from \eqref{eq:comp} turns out to be a compact and injective bounded linear operator with non-closed range. Consequently, there exists a singular system $\{\sigma_i(A),u_i(A),v_i(A)\}_{i=1}^\infty$
with complete orthonormal systems $\{u_i(A)\}_{i=1}^\infty$ in $L^2(0,1)$ and $\{v_i(A)\}_{i=1}^\infty$ in $\ell^2$, respectively, satisfying the conditions $Au_i(A)= \sigma_i(A) v_i(A) \;(i=1,2,...)$ and the positive and non-increasing sequence of singular values
$\{\sigma_i(A)\}_{i=1}^\infty$ with $\lim_{i \to \infty} \sigma_i(A)=0$.
In this context, we recall the following definition from \cite[Definition~8]{HofKin10}, which has been slightly updated. We also refer to \cite{HT97} for the related concept of ill-posedness intervals and further discussions.

\begin{definition} \label{def:degree}
Let $A$ be a compact operator mapping between Hilbert spaces with non-closed range. If there exists a constant $C>0$ and an exponent $0<\kappa<\infty$ such that
\begin{equation} \label{eq:kappa}
C\,i^{-\kappa} \le \sigma_i(A) \quad \forall i \in \mathbb{N}\,,
\end{equation}
we call the operator equation \eqref{eq:general} {\sl moderately ill-posed} of degree at most $\kappa$. If for all $\varepsilon>0$\,, \eqref{eq:kappa} does not hold with $\kappa$ replaced
by $\kappa-\varepsilon$ we call \eqref{eq:general} ill-posed of degree $\kappa$. If \eqref{eq:kappa} does not hold for arbitrarily large $\kappa$, we call \eqref{eq:general} {\sl severely ill-posed}.
Typical behaviour of severely ill-posed equations is {\sl exponential ill-posedness}, which means that there exist positive constants $C_1,C_2$ and an exponent $0<\theta<\infty$ such that the inequality
\begin{equation} \label{eq:exp}
\sigma_i(A) \le C_1\, \exp(C_2\,i^\theta) \quad \forall i \in \mathbb{N}
\end{equation}
holds.
\end{definition}

\medskip

{\parindent0em {\bf Open Question~\arabic{oq}\stepcounter{oq}:}} {\sl Is the operator equation \eqref{eq:general} with the composite operator $A$ from \eqref{eq:comp} moderately ill-posed or, in contrast, even exponentially ill-posed?}

 \medskip

To the best of our knowledge, this question that we ask to analysis and numerics in this note has not yet been answered satisfactorily, despite considerable efforts in the articles \cite{Gerth22,GHHK21,HM21}.
In Section~2 we formulate our current knowledge about analytical results and corresponding suggestions with respect to Open Question~2.
We will formulate in this context an Open Question~3, which meets the possible occurrence of exponential ill-posedness for integral equations kernel functions of limited smoothness.  Numerical computations of the singular value decay for $n \times n$ matrices approximating the operator $A$ from \eqref{eq:comp} will be discussed in Section~3. This decay is of exponential type, even if $n$ is fairly large. However, we reiterate an explanation given in \cite{Gerth22} that such phenomenon does not rule out the possibility of moderate ill-posedness for the original
equation \eqref{eq:general} with $A$ mapping between the infinite dimensional Hilbert spaces $L^2(0,1)$ and $\ell^2$. In Section~4 we formulate some open problems on the relation between the spectrum of non-compact operators and their discrete approximations.

\renewcommand{\thesection}{\large 2}
\section{\large Analysis facts for the infinite dimensional composite\\ Hausdorff moment problem} \label{sec:analysis}

In order to get insight into the structure of the compact operator $A$ from \eqref{eq:comp} with $B^{(H)}$ from \eqref{eq:Haus} and $J$ from \eqref{eq:J}, we recall some corresponding results from \cite{GHHK21}. In this context, the
system $\{L_j\}_{j=1}^\infty$ of normalized shifted Legendre polynomials with the explicit expressions
$$L_j(t)=\frac{\sqrt{2j-1}}{(j-1)!}\left( \frac{d}{dt}\right)^{j-1} t^{j-1}(1-t)^{j-1}\quad(0\leq t\leq 1),$$
which is a complete
orthonormal system in $L^2(0,1)$, plays a prominent role. Taking into account \cite[Proposition~5 and Remark~2]{GHHK21} we can factorize the Hausdorff (monomial) moment operator $B^{(H)}$ as $B^{(H)}=\Lo  \circ Q$, where $\Lo: \ell^2 \to \ell^2$ is an infinite lower triangular matrix occurring in the Cholesky
decomposition $\Ho=\Lo \Lo^*$ of the infinite Hilbert matrix $\Ho$. For an explicit expression of the entries of $\Lo$ we refer to \cite[Proposition~5]{GHHK21}. On the other hand,
the operator $Q: L^2(0,1) \to \ell^2$ defined as
\begin{equation}\label{eq:Q}
[Qx]_j:=\langle x, L_j \rangle_{L^2(0,1)}\quad (j=1,2,...)
\end{equation}
characterizes an isometry.
Consequently, we can factorize the operator equation  \eqref{eq:general} into an inner problem
\begin{equation} \label{eq:inner}
Q\,(Jx)=w\,,
\end{equation}
ill-posed of type~II with the compact operator $Q \circ J$,
and an outer problem
\begin{equation} \label{eq:outer}
\Lo\, w \,=\,y\,
\end{equation}
which is ill-posed of type~I.

Solving the operator equation \eqref{eq:inner} is in the papers \cite{Luetal13,Zhao10} considered as the reconstruction of the derivative $x \in L^2(0,1)$ of the function $Jx$ based on Legendre-type moments $\{\int_0^1 L_j(t)[Jx](t)dt  \}_{j=1}^\infty$,
or in other words as a specific approach of numerical differentiation.
Due to the isometry $Q$ and the well-known singular value asymptotics $\sigma_i(J) \asymp 1/i\,$\footnote{For two decreasing sequences $\{s_i\}_{i=1}^\infty$ and $\{t_i\}_{i=1}^\infty$ of positive numbers, the symbol $s_i \asymp t_i$ denotes
that there are constants $0<\underline c \le \overline c < \infty$
    such that the inequalities
    $$     \underline{c} \,s_{i} \le t_{j} \le \overline{c} \,s_{i} \qquad (i=1,2,...)$$
are valid.} of $J$, we have that $\sigma_i(Q \circ J) \asymp 1/i$ and that the operator equation \eqref{eq:inner} is  moderately ill-posed of degree one.

Since $\Lo$ is the infinite lower triangular matrix of the Cholesky decomposition of the infinite Hilbert matrix $\Ho$, we derive from the properties of $\Ho$ (cf., e.g, \cite{Magnus50}) that the spectrum of the non-compact operator $\Lo$ with $\|\Lo\|_{\mathcal{L}(\ell^2,\ell^2)}=\sqrt{\pi}$ contains the whole interval $[0,\sqrt{\pi}]$  and is purely continuous, because $\Ho$ has no embedded eigenvalues.

Although the complete singular system of $J$ is available as
$$\textstyle \left\{\sigma_i(J)=\frac{2}{(2i-1)\pi},\; u_i(J)=\sqrt{2}\cos{\scriptstyle\left(i-\frac{1}{2}\right)}\pi t,\; v_i(J)=\sqrt{2}\sin{\scriptstyle\left(i-\frac{1}{2}\right)}\pi t \right\}_{i=1}^\infty,$$
we cannot conclude from this to the singular system $\{\sigma_i(A), u_i(A),v_i(A)\}_{i=1}^\infty$ of the compact operator $A=\Lo \,\circ Q \,\circ J\,$ or at least to its singular value asymptotics, because the orthonormal eigensystem
$\{u_i(A)\}_{i=1}^\infty$ is completely unknown and does not seem to have anything to do with the eigensystem  $\{u_i(J)\}_{i=1}^\infty$. A visualization of this is given in Figure \ref{fig:sv_AJ}, where we plot a numerical approximation to $u_{10}(A)$ and $u_{10}(J)$ obtained by calculating the singular value decomposition of $1000\times 1000$ matrix approximations to $A$ and $J$.
\begin{figure}
\includegraphics[width=\linewidth]{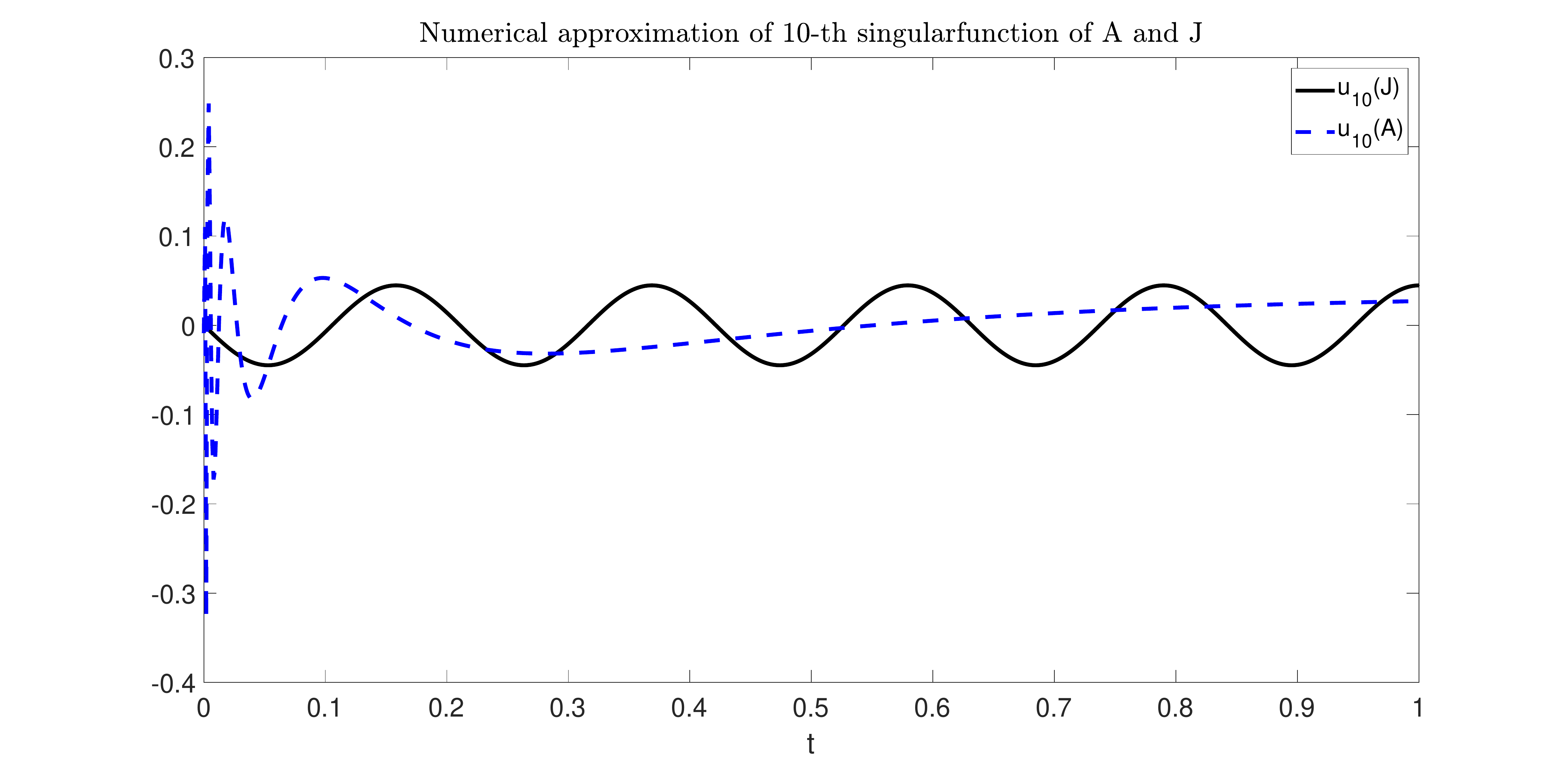}\caption{Singular functions $u_{10}(A)$ and $u_{10}(J)$ computed from the discretized operators. Obviously, the functions look completely different.}\label{fig:sv_AJ}
\end{figure}

We now recall a result from \cite{HM21}, which leaves open whether the operator equation \eqref{eq:general} with the composite operator $A$ from \eqref{eq:comp} is moderately or severely (exponentially) ill-posed in the sense of Definition~\ref{def:degree}.

\begin{proposition}[Corollary~3.6 and Theorem~5.1 in~\cite{HM21}]\label{pro:hm21}
For the composite operator $A: L^2(0,1) \to \ell^2$ from \eqref{eq:comp} there exist positive constants $\underline{C}$ and $\overline{C}$ such that, for sufficiently large indices $i \in \mathbb{N}$, the inequalities
\begin{equation} \label{eq:HM21}
\exp(-\underline{C}\,i) \le \sigma_i(A) \le \frac{\overline{C}}{i^{3/2}}
\end{equation}
concerning the singular values of $A$ are valid.
 \end{proposition}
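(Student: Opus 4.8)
The plan is to establish the two bounds in \eqref{eq:HM21} separately, using the factorization $A = \Lo \circ Q \circ J$ together with the known singular value asymptotics $\sigma_i(J) = \tfrac{2}{(2i-1)\pi}$ and the fact that $Q$ is an isometry.

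\textbf{Upper bound.} First I would exploit the submultiplicativity of singular values under composition with bounded operators. Since $Q$ is an isometry and $\Lo$ is bounded with $\|\Lo\|_{\mathcal{L}(\ell^2,\ell^2)} = \sqrt{\pi}$, one has $\sigma_i(A) = \sigma_i(\Lo \circ Q \circ J) \le \|\Lo\|\,\sigma_i(Q\circ J) = \sqrt{\pi}\,\sigma_i(J) \asymp 1/i$. That alone only gives decay of order $i^{-1}$, not $i^{-3/2}$, so the extra half-power must come from a finer argument. The natural route is to keep the action of $\Lo$ and not throw it away via its operator norm: because $\Lo$ is the Cholesky factor of the Hilbert matrix $\Ho = \Lo\Lo^*$, its own singular values decay (indeed $\sigma_i(\Ho) = \sigma_i(\Lo)^2$ has a known decay), and one can try a Ky Fan / Horn type inequality for products of singular values, $\prod_{k=1}^{i}\sigma_k(A) \le \prod_{k=1}^{i}\sigma_k(\Lo)\,\sigma_k(J)$, or more directly the interlacing estimate $\sigma_{i+j-1}(\Lo \circ T) \le \sigma_i(\Lo)\,\sigma_j(T)$. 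Splitting the index as $i = 2m$ and using $\sigma_m(\Lo)$ against $\sigma_m(J)$ produces a bound of the form $\sigma_{2m}(A) \le \sigma_m(\Lo)\,\sigma_m(J)$; since $\sigma_m(J) \asymp 1/m$ and $\sigma_m(\Lo)$ is bounded (it does not even go to zero, as the spectrum of $\Lo$ fills $[0,\sqrt\pi]$), this is not enough either. So the $i^{-3/2}$ must instead be obtained inside $L^2(0,1)$ by analyzing $Q \circ J$ more carefully — presumably using that $J x$ vanishes at $0$ and that the Legendre moments of functions with a boundary condition decay faster, i.e. one gains an extra $i^{-1/2}$ from the endpoint constraint $[Jx](0)=0$. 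I would look up Corollary~3.6 of \cite{HM21} for the precise mechanism; the honest statement is that the upper bound rests on a result imported from that paper, and the job here is to assemble it.

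\textbf{Lower bound.} For $\sigma_i(A) \ge \exp(-\underline C\, i)$ I would argue by contradiction or by a direct test-function construction. Since $A = \Lo \circ Q \circ J$ with $Q\circ J$ having singular values $\asymp 1/i$, any faster-than-exponential decay of $\sigma_i(A)$ would have to be caused entirely by $\Lo$. One then uses the lower bound on the singular values of the Cholesky factor coming from the asymptotics $\sigma_n(\Ho_n) \sim \exp(-3.526\,n)$ quoted in the introduction: on the $n$-dimensional coordinate subspace of $\ell^2$, the operator $\Lo$ restricted there has smallest singular value $\gtrsim \exp(-cn)$, and composing with the polynomially-behaved $Q\circ J$ (whose $i$-th singular value is only $\asymp 1/i$, hence certainly $\ge \exp(-\varepsilon i)$ for large $i$) yields $\sigma_i(A) \ge \exp(-\underline C\, i)$ via the min-max characterization $\sigma_i(A) = \max_{\dim V = i}\min_{0\ne x\in V}\|Ax\|/\|x\|$ applied to a cleverly chosen $i$-dimensional subspace $V$ built from the first $\sim i$ Legendre polynomials. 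Concretely: take $V = \mathrm{span}\{u_1(J),\dots,u_i(J)\}$ — no wait, that is the wrong subspace since $u_k(J)$ does not interact well with $\Lo$; rather take $V$ spanned by the preimages under $Q\circ J$ of the first $i$ coordinate vectors, estimate $\|Ax\| \ge \sigma_{\min}(\Lo|_{\ell^2_i})\cdot\|Q J x\| \ge \exp(-cn)\cdot\sigma_i(QJ)\|x\|$ on that subspace.

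\textbf{Main obstacle.} The genuine difficulty is the upper bound — specifically, extracting the gain from $1/i$ to $i^{-3/2}$. The operator-norm estimate for $\Lo$ is lossy, and $\Lo$'s own singular values do not tend to zero, so the improvement cannot come from $\Lo$; it must come from a refined analysis of $Q\circ J$ exploiting the boundary condition $[Jx](0)=0$, which is exactly the content of Corollary~3.6 in \cite{HM21}. I expect to spend most of the effort there, citing \cite{HM21} for the key smoothing lemma and then verifying that the composition with the bounded operator $\Lo$ preserves it, i.e. $\sigma_i(\Lo\circ(Q\circ J)) \le \|\Lo\|\,\sigma_i(Q\circ J) \le \overline C\, i^{-3/2}$. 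The lower bound, by contrast, is comparatively soft: it only asserts that the decay is no worse than exponential, which follows because neither the polynomial decay of $Q\circ J$ nor the (bounded-below on finite sections, exponentially-small-in-dimension) behaviour of $\Lo$ can drive $\sigma_i(A)$ below $\exp(-\underline C\, i)$.
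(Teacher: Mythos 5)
The paper itself does not prove this proposition: it is imported verbatim from Corollary~3.6 and Theorem~5.1 of \cite{HM21}, so the only thing to assess is the internal soundness of your reconstruction, and there the decisive gap is in your upper bound. You conclude that the gain from $1/i$ to $i^{-3/2}$ ``must come from a refined analysis of $Q\circ J$ exploiting the boundary condition $[Jx](0)=0$'' and end with $\sigma_i(\Lo\circ(Q\circ J))\le\|\Lo\|\,\sigma_i(Q\circ J)\le \overline{C}\,i^{-3/2}$. This cannot work: since $Q$ is an isometry, $(Q\circ J)^*(Q\circ J)=J^*J$, hence $\sigma_i(Q\circ J)=\sigma_i(J)=\tfrac{2}{(2i-1)\pi}\asymp 1/i$ exactly --- a fact stated explicitly in Section~2 --- and no boundary-condition refinement can improve it. As you yourself note at the outset, any argument that discards $\Lo$ through its operator norm is capped at the rate $1/i$; so the extra factor $i^{-1/2}$ must come from quantitative properties of the Hausdorff part $B^{(H)}$ (equivalently of $\Lo$, visible for instance in the weights $\tfrac{1}{j\sqrt{2j+1}}$ in $\langle AL_i,e^{(j)}\rangle_{\ell^2}$ recorded in Section~3), which is precisely what Corollary~3.6 of \cite{HM21} supplies. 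Your sketch replaces this by a mechanism that contradicts both the paper and your own opening observation, so the upper bound is not established.

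The lower bound sketch is closer to a viable argument but also contains an unjustified step: on your subspace $V$ of preimages under $Q\circ J$ of the first coordinate vectors (i.e.\ polynomials of degree at most $i-1$), you assert $\|QJx\|\ge\sigma_i(QJ)\,\|x\|$. The min--max principle gives the reverse inequality for an arbitrary $i$-dimensional subspace; a lower bound on this particular subspace must be proved separately, e.g.\ via an $L^2$ Markov-type inequality $\|p'\|_{L^2(0,1)}\le C i^2\|p\|_{L^2(0,1)}$ for polynomials of degree $\le i$, which gives $\|Jx\|\ge c\,i^{-2}\|x\|$ on $V$. Combined with $\|\Lo w\|\ge\|\Lo_{i+1}w\|\ge\sqrt{\sigma_{i+1}(\Ho_{i+1})}\,\|w\|\gtrsim \exp(-c\,i)\,\|w\|$ for $w$ supported on the first $i+1$ coordinates (note that $QJx$ is supported there, since $Jx$ has degree $\le i$), this does yield $\sigma_i(A)\ge\exp(-\underline{C}\,i)$ after adjusting constants. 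So the exponential lower bound is repairable along your lines, but as written it rests on an inequality that is false in general, and the harder half of the statement, the $i^{-3/2}$ bound, is in your own account deferred to \cite{HM21} rather than proved.
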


Hence, it needs further approaches to bridge the big gap of singular value rates occurring in Proposition~\ref{pro:hm21} and to decide whether the lower bound or the upper bound in the inequality chain \eqref{eq:HM21} is more realistic.
Since $A$ is a Hilbert-Schmidt operator, a conceivable alternative approach to the singular value asymptotics of $A$ is via the smoothness of the symmetric and positive kernel $k(s,t)$ for $(s,t) \in [0,1] \times [0,1]$ of the integral operator $A^*A$ mapping in $L^2(0,1)$.
In the appendix of \cite{HM21} it has been proven that this kernel representation attains the form of an infinite series as
\begin{equation} \label{eq:kernel}
[A^*Ax](s)= \int \limits_0^1 k(s,t)\,x(t) dt \quad \mbox{with} \quad k(s,t)=\sum \limits _{j=1}^\infty \frac{(1-s^j)(1-t^j)}{j^2}\,.
\end{equation}

\begin{proposition} \label{pro:kprop}
The kernel $k(s,t)$ from \eqref{eq:kernel} is continuous on the whole closed unit square, i.e.,~$k \in C([0,1]\times [0,1])$. However, there occur poles at some boundary points of the unit square for partial derivatives of $k$ with respect to the variable $s$. Even for the first partial derivative we have that $\frac{\partial k(s,t)}{d s} \notin C([0,1]\times [0,1])$ due to $\lim \limits_{s \to 1} \frac{\partial k(s,t)}{d s} = -\infty$ for $0 \le t<1$.
\end{proposition}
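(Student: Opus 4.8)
The plan is to handle the two claims separately: uniform summability yields the continuity of $k$, and a lower bound on the (formally) differentiated series yields the singular behaviour of $\frac{\partial k}{\partial s}$ along the boundary segment $s=1$.

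For continuity, observe that each term $f_j(s,t):=\frac{(1-s^j)(1-t^j)}{j^2}$ is a polynomial, hence continuous on $[0,1]\times[0,1]$, and satisfies $|f_j(s,t)|\le 1/j^2$ there because $0\le s^j,t^j\le 1$. Since $\sum_{j\ge 1}1/j^2<\infty$, the Weierstrass $M$-test gives uniform convergence of $\sum_j f_j$ on the closed square, so $k\in C([0,1]\times[0,1])$.

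Next I would justify term-by-term differentiation in the interior. Fix $\delta\in(0,1)$. On $[0,1-\delta]\times[0,1]$ the formally differentiated series has terms $\frac{\partial}{\partial s}f_j(s,t)=-\frac{s^{j-1}(1-t^j)}{j}$ bounded by $\frac{(1-\delta)^{j-1}}{j}$, which is summable; hence this series converges uniformly there, and together with the pointwise convergence of $\sum_j f_j$ this licenses differentiation under the sum, so for all $(s,t)\in[0,1)\times[0,1]$
$$\frac{\partial k(s,t)}{\partial s}=-\sum_{j=1}^{\infty}\frac{s^{j-1}(1-t^j)}{j}.$$
Now fix $t\in[0,1)$. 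Since $t^j\le t$ for $j\ge 1$ we have $1-t^j\ge 1-t>0$, so for $s\in(0,1)$
$$-\frac{\partial k(s,t)}{\partial s}=\sum_{j=1}^{\infty}\frac{s^{j-1}(1-t^j)}{j}\ge \frac{1-t}{s}\sum_{j=1}^{\infty}\frac{s^{j}}{j}=\frac{1-t}{s}\,\ln\frac{1}{1-s},$$
and the right-hand side tends to $+\infty$ as $s\to 1^-$. Hence $\frac{\partial k(s,t)}{\partial s}\to-\infty$ as $s\to 1^-$ for every fixed $t\in[0,1)$, which shows $\frac{\partial k}{\partial s}\notin C([0,1]\times[0,1])$. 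The same computation applied to the $m$-th $s$-derivative produces, for $m\ge 2$, a dominant contribution of order $\sum_j s^{j-m}\sim (1-s)^{-1}$, i.e.\ genuine poles at $s=1$, which accounts for the more general wording about poles of the $s$-derivatives at boundary points.

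I expect the only delicate point to be the bookkeeping around the order of limits: one must restrict to $[0,1-\delta]$ before differentiating under the sum, since uniform convergence of the differentiated series fails as $s\to 1$, and that very failure is the singularity being exhibited. Using the crude but clean estimate $1-t^j\ge 1-t$ (a sharper tail bound would also work) keeps the divergence argument transparent; no compactness or operator-theoretic input is needed.
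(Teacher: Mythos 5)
Your proof is correct and follows essentially the same route as the paper: the Weierstrass $M$-test with majorant $1/j^2$ for continuity, followed by differentiating the series term by term in $s$ and exhibiting the divergence $\frac{\partial k(s,t)}{\partial s}\to-\infty$ as $s\to1$ for fixed $t\in[0,1)$; the paper does the differentiation formally, whereas you add the (welcome) justification on $[0,1-\delta]\times[0,1]$ and the explicit lower bound $\frac{1-t}{s}\ln\frac{1}{1-s}$. Only your side remark on the $m$-th derivative is slightly off in the exponent (the blow-up is of order $(1-s)^{-(m-1)}$ rather than $(1-s)^{-1}$), but this does not affect the claims of the proposition.
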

\begin{proof} It is evident that the functions $0\le (1-s^j)(1-t^j)/j^2 \le 1/j^2$ are continuous for all $(s,t) \in [0,1]\times [0,1]$ and for all $j \in \mathbb{N}$, which implies that the series $\sum_{j=1}^\infty \frac{(1-s^j)(1-t^j)}{j^2}$ is uniformly absolutely convergent and that the kernel function $k$ is continuous on the whole closed unit square. By repeated formal partial differentiation of all terms inside the sum with respect to the variable $s$ we see that poles with growing order occur at some boundary points, which contradicts an assumption of infinite differentiability of the kernel $k$. Just the first partial derivative of the form
$\frac{\partial k(s,t)}{d s}= \sum _{j=1}^\infty \frac{-s^{j-1}(1-t^j)}{j}$ does not attain a finite value at the boundary points $(s,t)$ of the unit square characterized by $s=1$ and $0 \le t <1$, because of $\lim \limits_{s \to 1} \frac{\partial k(s,t)}{d s} = -\infty$.\end{proof}

\pagebreak

{\parindent0em {\bf Open Question~\arabic{oq}\stepcounter{oq}:}} {\sl Under which conditions can an operator equation \eqref{eq:general} with a Hilbert-Schmidt operator $A$ mapping from $L^2(0,1)$ into an arbitrary Hilbert space $Y$ with non-closed range $\mathcal{R}(A)$ be
severely (exponentially) ill-posed in the sense of Definition~\ref{def:degree}, provided that the kernel $k \in L^2([0,1]\times[0,1])$ from $A^*A: L^2(0,1) \to L^2(0,1)$ has limited smoothness, which means that $k$ is not infinitely many continuously differentiable on the whole closed unit square?}

\bigskip

Answers to that question, which seems to be an open one at the moment, could help to evaluate the impact of the limited smoothness of the kernel $k$ in \eqref{eq:kernel} on the singular value asymptotics of $A$ from \eqref{eq:comp}. If severely ill-posed equations would require
infinite differentiability of the kernel on the whole square $[0,1]\times [0,1]$, then a polynomial decay rate of $\sigma_i(A)$ could be concluded.

\renewcommand{\thesection}{\large 3}
\section{\large Numerical results for the discretized problem and an attempt to explain} \label{sec:numerics}
If we would know
the singular functions $u_i(A) \in L^2(0,1)$ and $v_i(A) \in \ell^2$, then we could verify the ideal $n \times n$-diagonal matrices
$$\mathbb{D}^A_n=\left(\langle A u_i(A),v_j(A)\rangle_{\ell^2} \right)_{i,j=1}^n={\rm diag}(\sigma_1(A),...,\sigma_n(A))$$
with the largest $n$ singular values $\sigma_i(A)=\langle A u_i(A),v_i(A)\rangle_{\ell^2}$ as diagonal entries.

\bigskip

{\parindent0em {\bf Open Question~\arabic{oq}\stepcounter{oq}
:}} {\sl What are the singular functions $u_i(A) \in L^2(0,1)$ and $v_i(A) \in \ell^2$ of the composite operator $A=B^{(H)}\circ J$ from \eqref{eq:comp}?}

\bigskip

In place of $\mathbb{D}^A_n$, we can  only calculate in practice for orthonormal test bases $\{\hat u_i\}_{i=1}^\infty$ in $L^2(0,1)$ and $\{\hat v_i\}_{i=1}^\infty$ in $\ell^2$ the $n \times n$-Gramian matrices
\begin{equation} \label{eq:Gn}
\mathbb{G}^A_n=\left(\langle A \hat u_i,\hat v_j\rangle_{\ell^2} \right)_{i,j=1}^n \qquad \mbox{with} \qquad  \sigma_i(\mathbb{G}^A_n) \le \sigma_i(A) \quad (i=1,2,...,n)
\end{equation}
(cf.~\cite[Theorem~3.1]{Allen85}) and $\;\lim\limits_{n \to \infty}\sigma_1(\mathbb{G}^A_n) = \sigma_1(A)\;$ (cf.~\cite[Theorem~3.5]{Allen85}).
As a consequence of \eqref{eq:Gn}, for fixed (possibly even large) $n$, the computed matrix singular values $\sigma_i(\mathbb{G}^A_n)\,(i=1,2,...,n)$ by using any test bases $\{\hat u_i\}_{i=1}^\infty$ and $\{\hat v_j\}_{j=1}^\infty$ can only yield lower bounds in (13) of the decay rates of the singular values $\sigma_i(A)$ of the operator $A$ mapping between infinite dimensional spaces. Due to the prominent role of the Legendre polynomials, the Gramian matrices
$\mathbb{G}^A_n=\left(\langle A L_i,e^{(j)}\rangle_{\ell^2)} \right)_{i,j=1}^n $
for $\hat u_i(t)=L_i(t)\;(0 \le t \le 1)$   and $\hat v_j=e^{(j)}\;(e_k^{(j)}=\delta_{k,j})$ are of specific interest. From \cite[Lemma~5.4]{HM21} we have that $\langle A L_1,e^{(j)}\rangle_{\ell^2}= \frac{1}{j+1}$ and for $2 \le i \le n$  that
$\langle A L_i,e^{(j)}\rangle_{\ell^2}= \frac{1}{j \sqrt{2j+1}}\langle L_i,h_j\rangle_{L^2(0,1)}$ by using the normalized monomials $h_j(t)=\sqrt{2j+1}\,t^j$ with $\|h_j\|_{L^2(0,1)}=1$. A plot of the first 20 singular values $\sigma_i(\mathbb{G}_n^A)\;(i=1,\dots,20)$ and $n=100$ is shown in Figure \ref{fig:gram}. There the exponential decay of the singular values is easily seen.

\begin{figure}
\includegraphics[width=\linewidth]{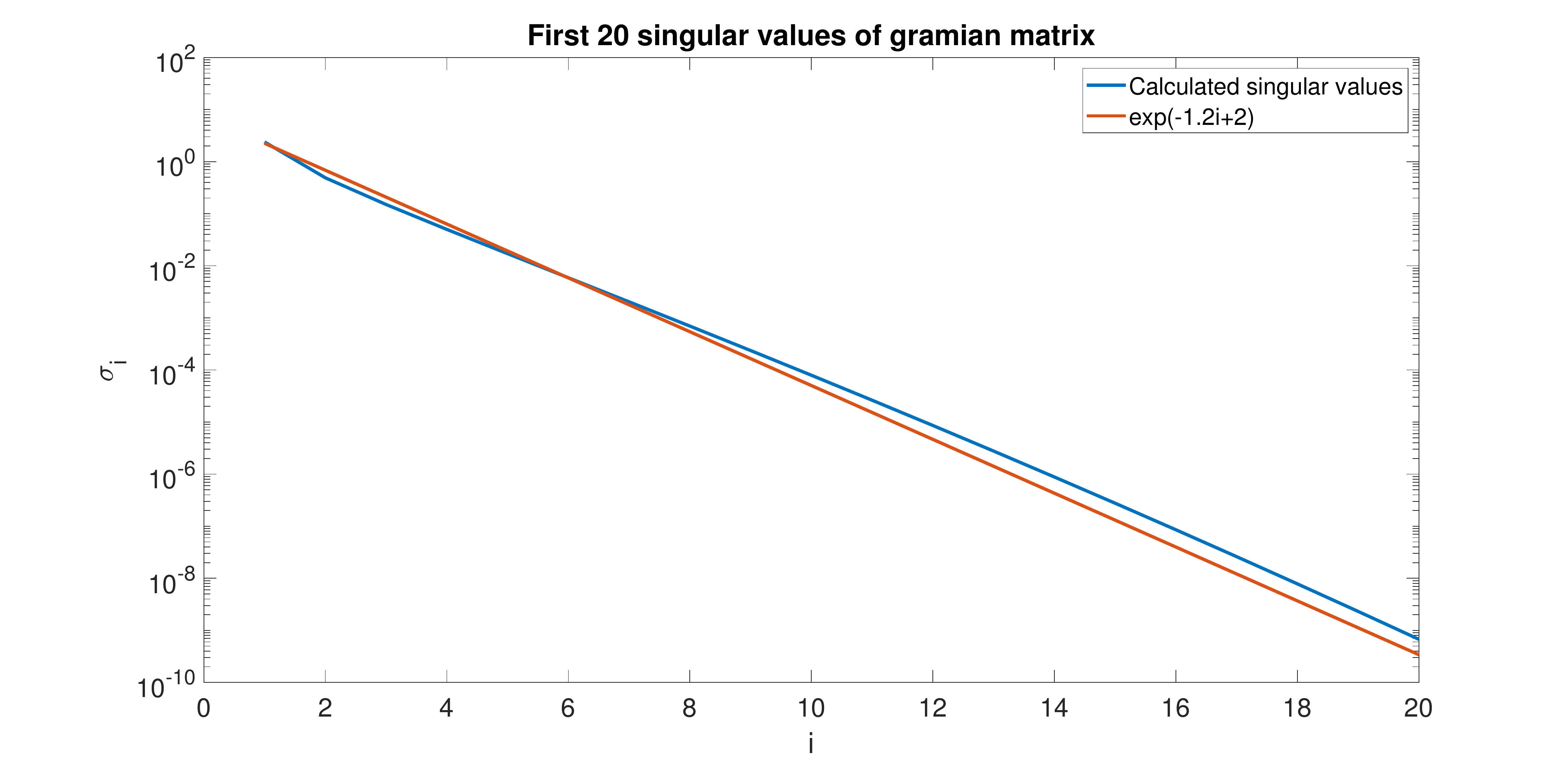}\caption{Semi-logarithmic plot of the first 20 singular values of $\mathbb{G}_n^A$ with $n=100$. As reference we plotted the function $\exp(-1.2i+2)$. The fit of this function to the computed singular values emphasizes the exponential decay of the latter.}\label{fig:gram}
\end{figure}

\pagebreak

The fact that the kernel $k$ from \eqref{eq:kernel} is not continuously differentiable makes {\sl severe ill-posedness} of the operator equation \eqref{eq:general} with the composite operator $A$ from \eqref{eq:comp} {\sl quite unlikely}. In contrast to that, numerical computations of singular values of $n \times n$-matrices arising from {\sl discretizations of} $A$ with $n$ supporting points show clearly an exponential decay, even for large $n$ (displayed with $n=10^4$ in Figure~\ref{fig:1}), and hence {\sl suggest severe ill-posedness}. However, there are no stringent analytical arguments that allow us to conclude to an exponential decay of the singular values of the operator $A: L^2(0,1) \to \ell^2$
from an exponential decay of singular values of associated discretization matrices. Taking into account the extreme ill-conditioning of Hilbert matrices, below we recall arguments from \cite{Gerth22} that might explain the exponential decay of matrix singular values even if the original operator $A$ mapping between infinite dimensional spaces has polynomially decreasing singular values.

\begin{figure}[h!]
\begin{center}
\includegraphics[width=\linewidth]{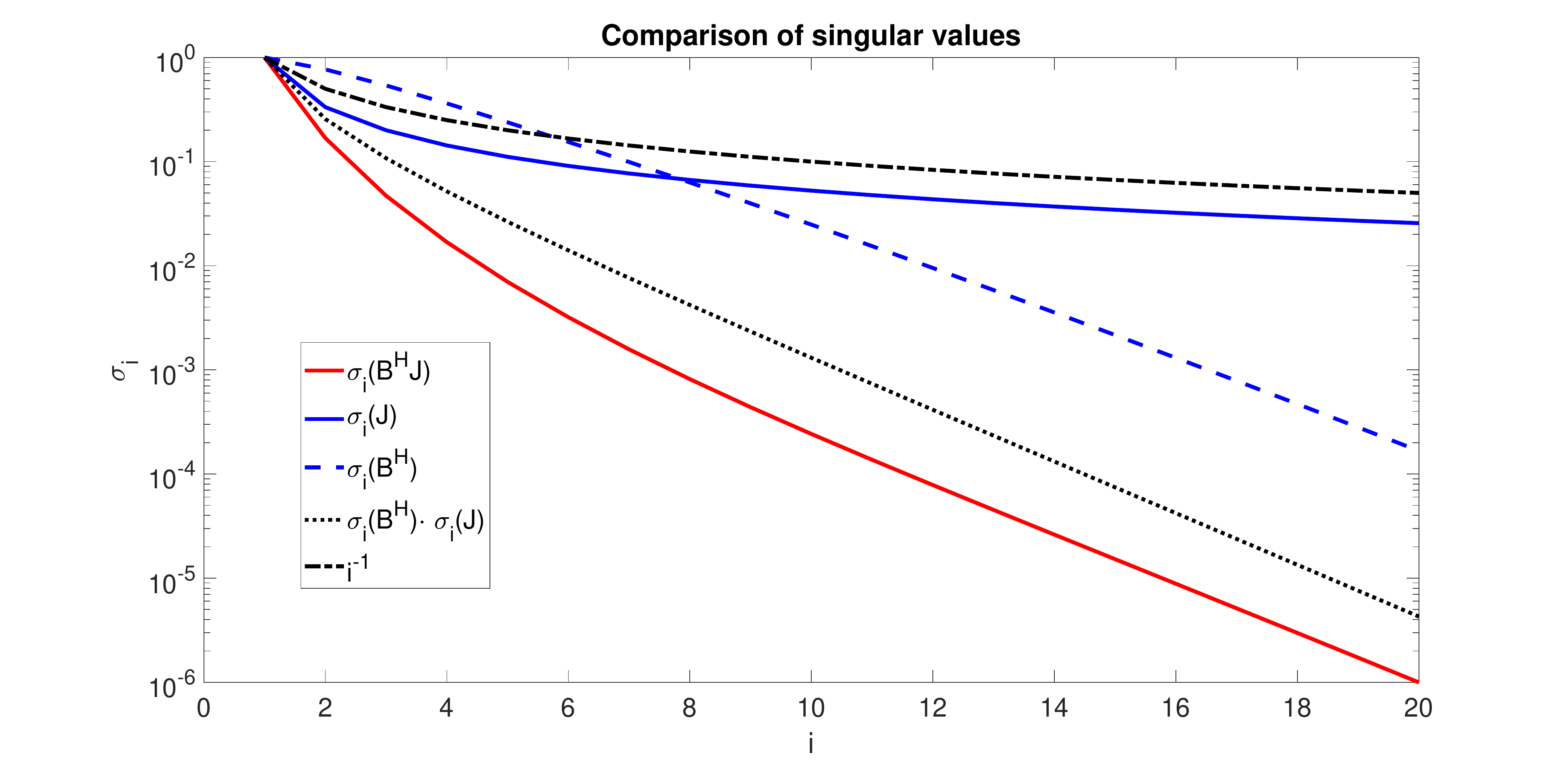}
\caption{(Plot courtesy of \cite{Gerth22}) Semi-logarithmic plot of singular values of $n \times n$-matrices with $n=10^4$ supporting points representing discretization matrices of the operators $A$, $B^{(H)}$ and $J$. While numerically the singular values of $J$ decay as suggested by the theory, the singular values of $B^{(H)}\circ J$ decay exponentially in the numerical experiments.} \label{fig:1}
\end{center}
\end{figure}

In \cite{Gerth22} some arguments were made that an exponential decay of matrix singular values is possible even if the singular values of the original operator $A$ are slowly decreasing, which we recall here. Consider the $n$-dimensional segments $\Ho_n$
of the Hilbert matrix $\Ho$ introduced above. Then the corresponding segments $\Lo_n$ (first $n$ columns and rows of $\Lo$) satisfy the condition $\Lo_n (\Lo_n)^*=\Ho_n$, which implies that $\sigma_i(\Lo_n )=\sqrt{\sigma_i(\Ho_n)}$.
In this context, we recall that (cf., e.g.,~\cite[Theorem~3.5]{Allen85}) $$1 \le \sigma_1(\Ho_n)  \le \pi= \lim\limits_{n \to \infty} \sigma_1(\Ho_n)=\sigma_1(\Ho),$$
and we rewrite the inequality from~\cite[formula (4.8)]{Beckermann00}) as
\begin{equation} \label{eq:Beck1}
\sigma_i(\Lo_n) \le 2\,[\varphi(n)]^{i-1}\,(\sigma_1(\Ho_n))^{1/2} \quad (i=1,2,...,n-1),
\end{equation}
with the damping factor $\varphi(n)$,
\begin{equation} \label{eq:Beck2}
 0< \varphi(n):= \exp \left(-\frac{\pi^2}{2 \ln(8n-4)}\right)<1\,,
\end{equation}
which is growing with $n$ to one with an asymptotics characterized by \linebreak $1-\varphi(n) \sim 1/\ln(n)$  as $n \to \infty$.
We conjecture that \eqref{eq:Beck1} holds approximately as an equation if $n \gg i$, which means that
for fixed and sufficiently large $n$ the decay of $\sigma_i(\Lo_n)$ with respect to growing $i$ is exponentially as
$\sigma_i(\Lo_n) \sim  \exp(-K\,i)\;$ with some $K=K(n)>0$. Table~\ref{tab:1} shows that the multiplier $\varphi(N)^{i-1}$
in \eqref{eq:Beck1} is rather far from one even for $i$ of medium size and large $n$.

\bigskip

\begin{table}[h!]
  \begin{center}
    \begin{tabular}{ >{\raggedleft\arraybackslash}p{1.5cm} p{1.5cm} p{1.5cm} p{1.5cm} p{2.5cm}}
      $\;\;\;\;\;\;n$ & $i=2$ & $i=4$ &   $i=10$ & $i=51$  \\
      \\
      \hline
      \\
     \;\;\;\;$10^2$  & 0.4777 & 0.1091 & 0.0013 & $9.1932\cdot 10^{-17} $  \\
      \;\;$10^3$  & 0.5774 & 0.1926 & 0.0071 &$1.1920\cdot 10^{-12}$  \\
      $10^4$ & 0.6459       &    0.2695    &   0.0196 & $3.2240\cdot 10^{-10}$        \\
      $10^6$ & 0.7331 &  0.3940 & 0.0612 & $1.8129\cdot 10^{-7}$\\
      $10^9$ & 0.8054 & 0.5224  & 0.1426 & $1.9982\cdot 10^{-5}$
    \end{tabular}
    \vspace{5mm}
    \caption{Values of occurring multiplier $\varphi(n)^{i-1}$ in \eqref{eq:Beck1}.} \label{tab:1}
      \end{center}
\end{table}

It was conjectured in \cite{Gerth22} that $\lim_{n\rightarrow \infty} \sigma_i(\Ho_n)=\pi$ for any fixed $i$ as it appears that even numerically the singular values of the truncated Hilbert matrix (and hence those of the operators $\Lo_n$) increase slowly as the truncation index grows. For the multiplication operator and its discretization an analogon to this conjecture was shown in \cite{Gerth22}, but here we must leave it as an open question, since we only have the upper bound \eqref{eq:Beck1} on the singular values, but no lower bound.

\medskip

{\parindent0em {\bf Open Question~\arabic{oq}\stepcounter{oq}:}} {\sl Is $\lim_{n\rightarrow \infty} \sigma_i(\Ho_n)=\pi$ for any fixed $i$, or, if not, what is the behaviour of $\sigma_i(\Ho_n)$ as $n$ increases?}
 \smallskip

To apply these findings for the interpretation of the singular value decay curves in Figure~\ref{fig:1}, we denote by $A_n$ and $J_n$ $n \times n$-discretization matrices of the operators $A=\Lo \circ Q \circ J$ and $J$, respectively.
Then we know that $\sigma_{2i}(A_n) \le \sigma_i(\Lo_n)\, \sigma_i(J_n),$ but the associated curves of Figure~\ref{fig:1} even suggest that we have for large $n$ approximately
\begin{equation} \label{eq:inter}
\sigma_{i}(A_n) \approx  \sigma_i(\Lo_n)\, \sigma_i(J_n) \approx C\,\exp(-K(n)\,i)\,\sigma_i(J_n),
\end{equation}
where $\sigma_i(J_n) \sim 1/i$ holds.
If $n$ is not too large, then for medium values of $i$ the singular value $\sigma_i(A_n)$ is dominated by $\sigma_i(\Lo_n) \sim \exp(-K(n)\,i)$, because the damping multiplier $\varphi(n)^{i-1}$ in \eqref{eq:Beck1} is still far from one. This explains the exponential decay in the $\sigma_{i}(A_n)$-curve of Figure~\ref{fig:1} independent of the objective singular value decay rate of the operator $A$ mapping between infinite dimensional spaces. For making assertions
concerning this decay rate, numerics reach its limit here. Only for small $i$ and very large $n$ we have that $\varphi(n)^{i-1}$ is close to one, which would reflect a polynomial decay of $\sigma_i(J_n)$ in $\sigma_i(A_n)$
in a realistic way.  One might argue that this is visible in Figure \ref{fig:1} for $i<5$.

\renewcommand{\thesection}{\large 4}
\section{\large The relationships between the spectrum of non-compact operators and their discrete approximations} \label{sec:noncomp}
The argument presented above boils down to the convergence of the matrix approximations $\Lo_n$ to $B^H$, and the relation of the spectra. As before, we recall the relations $\Lo_n\Lo_n^*=\Ho_n$ and $\Lo\Lo^*=\Ho$ and discuss the Hilbert matrix in the following. To compare $\Ho$ and $\Ho_n$ we extend here $\Ho_n$ to an infinite matrix by filling it up with zero rows and columns such that both are operators mapping in $\ell^2$.
In this sense, $\Ho_n$ has a finite dimensional range and hence represents a compact operator, whereas $\Ho$ is not compact. Therefore, $\Ho_n$ cannot converge to $\Ho$ in norm, i.e.~$\|\Ho-\Ho_n\|_{\llnorm}\not\to 0$ as $n \to \infty$,
because the norm limits of compact operators are always compact. Related to this are the spectra, where $\Ho_n$ possesses a discrete spectrum for all $n \in \mathbb{N}$, but the spectrum of $\Ho$ is purely continuous. This raises the following question that seems to be closely related to Open Question 4.
\medskip

{\parindent0em {\bf Open Question~\arabic{oq}\stepcounter{oq}:}} {\sl How does $\|\Ho-\Ho_n\|_{\llnorm}$ behave as function of $n$?}
 \smallskip

The truncated matrices $\Ho_n$ have been treated in the literature fairly extensively, but we could not find publications on the remainder. Hence we can only speculate here, and it seems plausible that $\|\Ho-\Ho_n\|_{\llnorm}=\pi=\|\Ho\|_{\llnorm}$. To support this idea, we show in Figure \ref{fig:operator_normconvergence} a plot of the norms $\|\Ho_n-\Ho_{100}\|_{\mathcal{L}(\ell^2(n),\ell^2(n))}$ that we computed in the discrete finite dimensional setting with MATLAB. As $n$ increases the norm increases. For the largest $n=12000$ we find $\|\Ho_n-\Ho_{100}\|_{\mathcal{L}(\ell^2(n),\ell^2(n))}\approx 2.19$, which is still significantly smaller than $\pi$, but in terms of the slow convergence of the term $\varphi(n)$ from \eqref{eq:Beck2}, we might need an unreasonable large $n$ to get close to $\pi$.

\begin{figure}
\includegraphics[width=\linewidth]{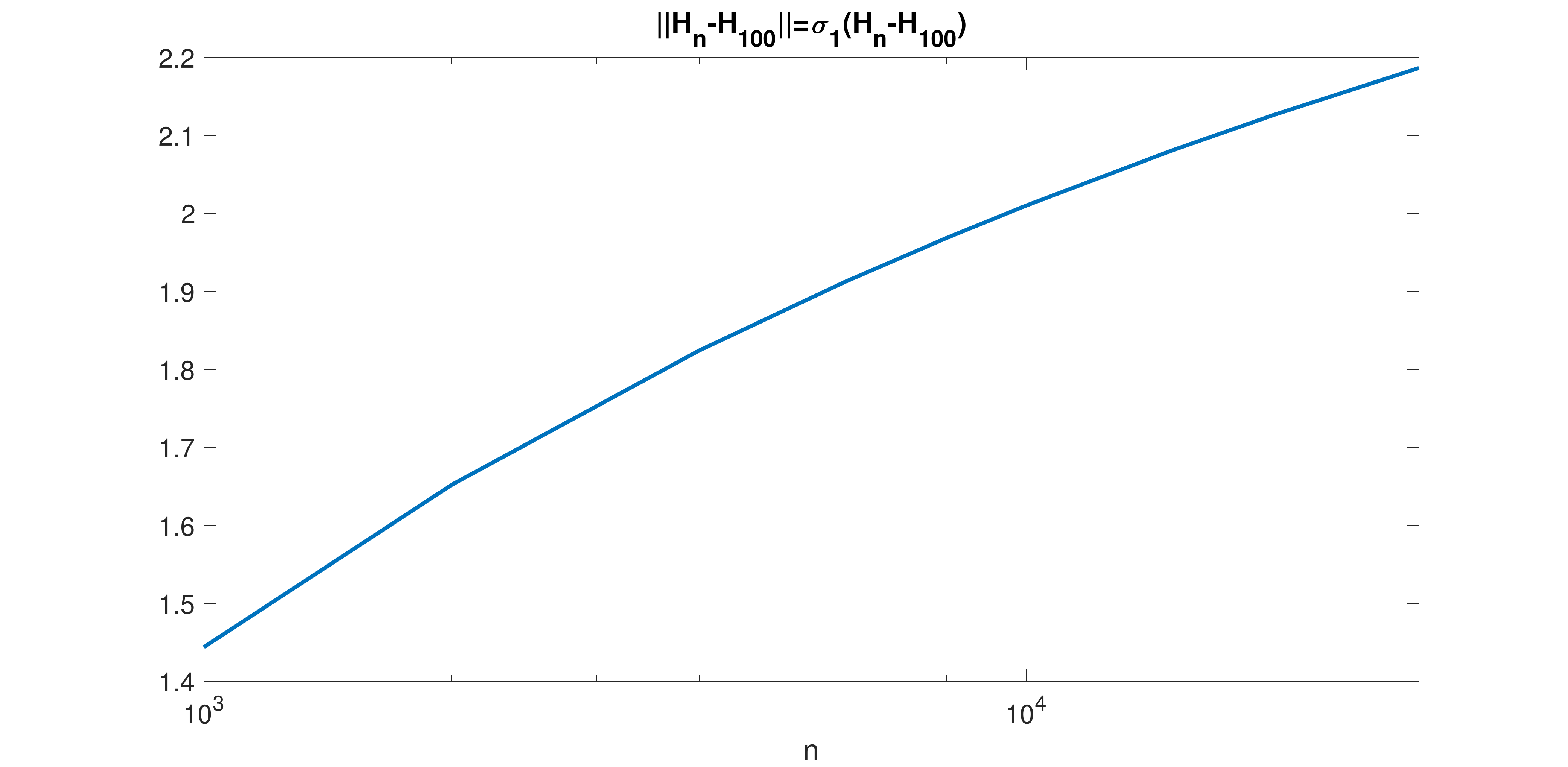}\caption{Numerical computation of $\|\Ho_n-\Ho_{100}\|_{\mathcal{L}(\ell^2(n),\ell^2(n))}$ for $1000\leq n\leq 12000$.}\label{fig:operator_normconvergence}
\end{figure}

\section*{Acknowledgment}
Daniel Gerth has been supported in part by the German Science Foundation (DFG) under the grant GE 3171/1-1 (Project No. 416552794) and by the European Social Fund in conjunction with the German Federal State of Saxony as part of the REACT Junior Research Group project ELIOT (100602771). Bernd Hofmann has been supported by the German Science Foundation (DFG) under the grant HO~1454/13-1 (Project No.~453804957).


\begin{flushleft}

Daniel Gerth,\\
Chemnitz University of Technology, \\
Faculty of Mathematics, 09107 Chemnitz, Germany,\\
Email: {\tt daniel.gerth@mathematik.tu-chemnitz.de},\\

\smallskip

Bernd Hofmann,\\
Chemnitz University of Technology, \\
Faculty of Mathematics, 09107 Chemnitz, Germany,\\
Email: {\tt bernd.hofmann@mathematik.tu-chemnitz.de},\\

\end{flushleft}


\begin{thebibliography}{99}
\thispagestyle{myheadings}

\footnotesize

\bibitem{Allen85} {R.~C.~Allen Jr., W.~R.~Boland,V.~Faber and
G.~M.~Wing}, {\it Singular values and condition numbers of Galerkin matrices arising from linear integral equations of the first kind}, J.~Math.~Anal.~Appl., 109(2): 564--590, 1985.

\bibitem{Beckermann00}  {B.~Beckermann}, {\it The condition number of real Vandermonde, Krylov and  positive definite Hankel matrices}, Numer.~Math., 85(4):553--577, 2000.

\bibitem{Gerth22} D.~Gerth \emph{A note on numerical singular values of compositions with non-compact operators}, Electronic Transactions on Numerical Analysis, 57:57--66, 2022, https://doi.org/10.1553/etna\_vol57s57.


\bibitem{GHHK21} {D.~Gerth, B.~Hofmann, C.~Hofmann and S.~Kindermann}, {\it The Hausdorff moment problem in the light of ill-posedness of type~I}, Eurasian Journal of Mathematical and Computer Applications,
9(2):57--87, 2021.


\bibitem{HofFlei99} {B.~Hofmann~B and G.~Fleischer}, {\it Stability rates for linear ill-posed problems with  compact and non-compact operators}, Z.~Anal.~Anwendungen, 18(2):267--286, 1999.


\bibitem{HofKin10} {B.~Hofmann and S.~Kindermann}, {\it On the degree of ill-posedness for linear problems with non-compact operators}, Methods Appl.~Anal., 17(4):445--461, 2010.

\bibitem{HM21}  {B.~Hofmann and P.~Math\'{e}},
{\it The degree of ill-posedness of composite linear ill-posed problems with focus on the impact of the non-compact Hausdorff moment operator}, Electronic Transactions on Numerical Analysis,
57:1--16, 2022, https://doi.org/10.1553/etna\_vol57s1.


\bibitem{HT97}  {B.~Hofmann and U.~Tautenhahn}, {\it On ill-posedness measures and space change in Sobolev scales}, Z.~Anal.~Anwendungen, 16(4):979--1000, 1997.

\bibitem{Luetal13} {S.~Lu, V.~Naumova and S.~V.~Pereverzev}, {\it Legendre polynomials as a recommended basis for numerical differentiation in the presence of stochastic white noise},
J.~Inverse Ill-Posed Probl., 21(2):193--216, 2013.


\bibitem{Magnus50}   {W.~Magnus}, {\it On the spectrum of Hilbert's matrix}, Amer.~J.~ Math., 72:699--704, 1950.

\bibitem{MNH20} {P.~Math\'e, M.~T.~Nair and, B.~Hofmann}, {\it Regularization of linear ill-posed problems involving multiplication operators}, Applicable Analysis, 101(2):714--732, 2022.
          

\bibitem{Nashed87} {M.~Z.~Nashed}, {\it A new approach to classification and regularization of ill-posed operator equations},
In: Inverse and Ill-posed Problems Sankt Wolfgang, 1986, volume~4 of Notes Rep.~Math.~Sci.~Engrg. (Eds.: H.~W.~Engl and C.~W.~Groetsch),
Academic Press, Boston, 1987, pp.~53--75.


\bibitem{Zhao10} {Z.~Zhao}, {\it A truncated Legendre spectral method for solving numerical differentiation}, Int.~J.~Comput.~Math., 87(14):3209-3217, 2010.


\end{thebibliography}
\end{document}